\newtheorem{thrm}{Theorem}[section]
\newtheorem{cor}[thrm]{Corollary}
\theoremstyle{definition}
\newtheorem{definition}[thrm]{Definition}
\newtheorem{remark}[thrm]{Remark}
\numberwithin{equation}{section}
\newcommand{\M}{{M}}
\newcommand{\U}{{U}}
\newcommand{\csm}{\mathrm{csm}}
\let\@@citation@@=\citation
\renewcommand{\citation}[1]{\@@citation@@{#1}%
\@for\@tempa:=#1\do{\@ifundefined{cit@\@tempa}%
  {\global\@namedef{cit@\@tempa}{}}{}}%
}
\def\@lbibitem[#1]#2#3\par{%
  \@ifundefined{cit@#2}{}{\item[\@biblabel{#1}\hfill]}%
  \if@filesw
      {\let\protect\noexpand
       \immediate
       \write\@auxout{\string\bibcite{#2}{#1}}}\fi\ignorespaces
  \@ifundefined{cit@#2}{}{#3}}
\def\@bibitem#1#2\par{%
  \@ifundefined{cit@#1}{}{\item}%
  \if@filesw \immediate\write\@auxout
    {\string\bibcite{#1}{\the\value{\@listctr}}}\fi\ignorespaces
  \@ifundefined{cit@#1}{}{#2}}
\author[Ashraf]{Ahmed Umer Ashraf}
\address{Mathematics Department\\
 University of Western Ontario\\
 London, Ontario, Canada.}
\email{aashra9@uwo.ca}
\author[Backman]{Spencer Backman}
\address{Department of Mathematics and Statistics,
University of Vermont, Burlington,
Vermont, USA.}
\email{Spencer.Backman@uvm.edu}
\thanks{}
\keywords{Matroids, tropical intersection theory, Tutte polynomial, Tutte activities, matroid Chern-Schwartz-MacPherson cycles}
\subjclass{Primary 05B35 , Secondary 52B40}
\begin{document}

\title{Matroid Chern-Schwartz-MacPherson cycles and Tutte activities}

\begin{abstract}
 \hspace{-0.7em} L\'opez de Medrano-Rinc\'on-Shaw defined Chern-Schwartz-MacPher-son  cycles for an arbitrary matroid $\M$ and proved by an inductive geometric argument that the unsigned degrees of these cycles agree with the coefficients of $T(\M;x,0)$, where $T(\M;x,y)$ is the Tutte polynomial associated to $\M$.  Ardila-Denham-Huh recently utilized this interpretation of these coefficients in order to demonstrate their log-concavity.  In this note we provide a direct calculation of the degree of a matroid Chern-Schwartz-MacPherson  cycle by taking its stable intersection with a generic tropical linear space of the appropriate codimension and showing that the weighted point count agrees with the Gioan-Las Vergnas refined activities expansion of the Tutte polynomial.
\end{abstract}
\maketitle
\section{The $\beta$-expansion of the Tutte polynomial}

Let $E$ be a finite set equipped with a linear ordering $<$, and let $\M$ be a matroid on $E$. Throughout this note, we fix the underlying set to be $E = \llbracket n \rrbracket := \{ 0,1, \ldots, n\}$ equipped with the natural order. We also fix the rank of the matroid $M$ to be $d+1 > 0$. 
We denote the collection of bases of $\M$ by $\mathcal{B}(\M)$. Consider a basis $B$ of $\M$. For an element $e \in  \llbracket n \rrbracket \backslash B$, the \emph{fundamental circuit} $\gamma(e;B)$ of $B$ with respect to $e$ is given by
\begin{align}
    \gamma(e; B) &:= \{e' \in E: B \cup e \backslash e' \in \mathcal{B}(\M)\}
\end{align}
and for an element $e \in B$, the \emph{fundamental cocircuit} $\gamma^*(e;B)$ of $B$ with respect to $e$ is given by
\begin{align}
    \gamma^*(e; B) &:= \{e' \in E: B \cup e' \backslash e \in \mathcal{B}(\M)\}.
\end{align}
The \emph{external activity} $\mathrm{ex}(B)$ and \emph{internal activity} $\mathrm{in}(B)$ of a basis $B$ are defined as
\begin{align}
    \mathrm{ex}(B) &:= \big| \{ e \in \llbracket n \rrbracket \backslash B: \min_< \gamma(e; B) = e  \} \big|, \\
    \mathrm{in}(B) &:= \big| \{ e \in  B: \min_< \gamma^*(e; B) = e  \} \big|. 
\end{align}
The Tutte polynomial of a matroid $\M$ is the generating function for these statistics over the collection of bases of $\M$:
\begin{align}
    T(\M; x, y) = \sum_{B \in \mathcal{B}(\M)} x^{\mathrm{in}(B)}y^{\mathrm{ex}(B)}=\sum_{i,j} t_{i,j}x^iy^j.
\end{align}
We note that this polynomial is well-defined, i.e. it is independent of the total order on the ground set. We denote by $\mathcal{B}_{i,j}(M)$ the set of bases $B$ of the matroid $M$ with $\mathrm{ex}(B)=i$ and $\mathrm{in}(B)=j$  so that $|\mathcal{B}_{i,j}(M)|=t_{i,j}$.  We refer the reader to \cite{backman-activities} for an overview of the theory of Tutte activities.

Gioan and Las Vergnas gave a refined activities expansion of the Tutte polynomial. We recall Crapo's beta invariant $\beta(\M):=t_{1,0}(\M)$.  Note that for a nonempty matroid $\M$ which is not a loop,  $\beta(\M) = 0$ if and only if $\M$ is not connected.  If $\mathcal{F}$ is a flag of flats of a loopless matroid $\M$ of the form
\begin{align*}
    \mathcal{F}= \{ \varnothing = F_0 \subsetneq F_1 \subsetneq \cdots \subsetneq  F_k  = \llbracket n \rrbracket \}
\end{align*}
where $F_i$ is a flat of $M$ for each $i=0, \ldots, k$, then we call $\mathcal{F}$ \emph{proper}. The integer $k$ is the \emph{length} of the proper flag $\mathcal{F}$, and we denote it by $\ell(\mathcal{F})$. The difference $F_i\backslash F_{i-1}$ will be denoted by $\Delta_i$ whenever the flag $\mathcal{F}$ is clear from the context. We call a proper flag $\mathcal{F}$ \emph{increasing} if $  \min(\Delta_1)< \min(\Delta_2) < \cdots <\min(\Delta_{k})$. The one-variable specialization of the Tutte polynomial of a loopless matroid $M$ has the following expansion:
\begin{align} \label{eq:tutte-beta}
    T(\M;x, 0) &= \sum_\mathcal{F} \bigg[\prod_{i=1}^{\ell(\mathcal{F})} \beta(\M|F_{i}/F_{i-1}) \bigg] x^{\ell(\mathcal{F})}
\end{align}
where the sum is taken over all increasing proper flags of flats.  We may assume that each minor $M|F_i/F_{i-1}$ is connected,  otherwise $\beta(M|F_i/F_{i-1}) =0$. This formula is a direct consequence of the following structural result of Gioan and Las Vergnas in \cite{gioan-lasvergnas, gioan-thesis}.   

Let $\mathcal{A}_{k}(\M)$ be the set of all pairs $(\mathcal{F},\mathcal{B_F})$ where $\mathcal{F} = \{ \varnothing = F_0 \subsetneq F_1 \subsetneq \cdots \subsetneq F_{k} = \llbracket n \rrbracket\}$ is an increasing proper flag of flats of length $k$, and $\mathcal{B_F} = (B_1,\dots, B_{k})$ where $B_i \in \mathcal{B}_{1,0}(\M|F_i/F_{i-1})$ for $1 \leq i \leq k$.

\begin{thrm}[{\cite[Theorem 4.2]{gioan-lasvergnas} \cite[Theorem 2.3.1]{gioan-thesis}}]
  For $1 \leq k \leq d+1$, the sets $\mathcal{A}_{k}(\M)$ and $\mathcal{B}_{k,0}(\M)$  are canonically in bijection.
\end{thrm}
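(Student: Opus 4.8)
The plan is to exhibit the bijection explicitly and verify it by induction on $k$. Recall that $\mathcal{B}_{k,0}(\M)$ consists of the bases with external activity $0$ and internal activity $k$, so that it is counted by the coefficient of $x^k$ in $T(\M;x,0)$. In the forward direction I would send a pair $(\mathcal{F},\mathcal{B_F}) \in \mathcal{A}_k(\M)$ to the disjoint union $\Phi(\mathcal{F},\mathcal{B_F}) := B_1 \sqcup \cdots \sqcup B_k$, where each $B_i \subseteq \Delta_i$ is a basis of the layer minor $\M|F_i/F_{i-1}$. Since $\mathrm{cl}(B_1 \cup \cdots \cup B_i) = F_i$ by induction on $i$, the set $B := \Phi(\mathcal{F},\mathcal{B_F})$ has $d+1$ elements and spans $\llbracket n \rrbracket$, hence is a basis. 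The content of this direction is to show $B \in \mathcal{B}_{k,0}(\M)$.

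The first thing I would record is that the \emph{increasing} condition is equivalent to $\min(\Delta_i) = \min(\llbracket n \rrbracket \setminus F_{i-1})$ for every $i$; in particular $0 \in F_1$, and since the minimum of the ground set of a minor is internally active whenever it lies in the chosen basis, each $m_i := \min(\Delta_i)$ is the unique internally active element of the beta-basis $B_i$ in its layer. I would then verify the two activity statements by localizing fundamental (co)circuits to the layers. For internal activity, each $m_i$ is internally active for $B$ in $\M$, because any $e < m_i$ lies either in $F_{i-1} \subseteq \mathrm{cl}(B \setminus m_i)$ or in a higher layer $\Delta_l$ with $\min(\Delta_l) > m_i$, so no such $e$ can lie in $\gamma^*(m_i;B) = \llbracket n \rrbracket \setminus \mathrm{cl}(B \setminus m_i)$; conversely, for $e \in B_j \setminus m_j$ the fact that $e$ is \emph{not} internally active in $\M|F_j/F_{j-1}$ yields an exchange $B_j \setminus e \cup e'$ with $e' < e$ inside layer $j$ that lifts to a basis of $\M$, so $e$ is not internally active in $\M$. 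Hence $\mathrm{in}(B)=k$. Symmetrically, for $e \notin B$ the containment $e \in \mathrm{cl}(B_1 \cup \cdots \cup B_j)$ confines $\gamma(e;B)$ to $F_j$ and descends it to the layer where $B_j$ has external activity $0$, producing a circuit element below $e$; thus $\mathrm{ex}(B)=0$, and $\Phi$ maps into $\mathcal{B}_{k,0}(\M)$.

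For bijectivity I would induct on $k$, the base case $k=1$ being the tautology $\mathcal{A}_1(\M) \cong \mathcal{B}_{1,0}(\M)$. For the inductive step I would peel off the first flat. The equivalence above shows that a proper flag is increasing if and only if $0 \in F_1$ and the truncated flag $F_1 \subsetneq \cdots \subsetneq F_k$ is increasing in $\M/F_1$, whose layer minors coincide with the $\M|F_{i+1}/F_i$. This gives a canonical decomposition
\begin{equation*}
\mathcal{A}_k(\M) \;\cong\; \bigsqcup_{F} \;\mathcal{B}_{1,0}(\M|F) \times \mathcal{A}_{k-1}(\M/F),
\end{equation*}
with $F$ ranging over flats such that $0 \in F \subsetneq \llbracket n \rrbracket$ and $\M|F$ is connected. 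Via the inductive hypothesis applied to each $\M/F$, the theorem reduces to the \emph{parallel} statement on the basis side,
\begin{equation*}
\mathcal{B}_{k,0}(\M) \;\cong\; \bigsqcup_{F} \;\mathcal{B}_{1,0}(\M|F) \times \mathcal{B}_{k-1,0}(\M/F),
\end{equation*}
over the same index set and compatibly with $\Phi$. One direction is again a localized activity computation: the union of a beta-basis of $\M|F$ with a basis in $\mathcal{B}_{k-1,0}(\M/F)$ lies in $\mathcal{B}_{k,0}(\M)$, with $0$ as its smallest internally active element.

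The hard part, and the crux of the whole argument, is the reverse map: canonically recovering the first flat $F_1$ from $B \in \mathcal{B}_{k,0}(\M)$ alone, i.e.\ inverting $\Phi$ in one step. The internally active elements $0 = m_1 < m_2 < \cdots < m_k$ are intrinsic to $B$, but I expect a naive threshold such as $\mathrm{cl}\{e \in B : e < m_2\}$ to fail, since the layer $\Delta_1$ may contain basis elements exceeding $m_2$. The candidate I would pursue is the Gioan--Las Vergnas \emph{active partition}: use the fundamental cocircuits $\gamma^*(m_i;B)$ to partition $\llbracket n \rrbracket$ into ordered blocks, and take $F_1$ to be the closure of the block containing $0$ (recovering $\Delta_i$ as the $i$-th block). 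The obstacle is purely matroid-theoretic: one must prove that this block closure is a flat $F_1$ with $0 \in F_1$, that $\M|F_1$ is connected with $B \cap F_1 \in \mathcal{B}_{1,0}(\M|F_1)$ and $B \setminus F_1 \in \mathcal{B}_{k-1,0}(\M/F_1)$, and that $F_1$ is the \emph{unique} flat with these properties, so that $B \mapsto (F_1, B \cap F_1, B \setminus F_1)$ is well defined and inverse to the union map. This hinges on understanding how the fundamental cocircuits of distinct internally active elements nest and interact — exactly the structural phenomenon isolated by Gioan and Las Vergnas — and once it is in place the induction closes, making $\Phi$ the desired canonical bijection.
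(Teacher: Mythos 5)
The paper offers no proof of this statement to compare against---it is quoted directly from Gioan--Las Vergnas---so your attempt has to stand on its own. Its first half does: the union map $\Phi(\mathcal{F},\mathcal{B_F}) = B_1 \sqcup \cdots \sqcup B_k$ is the right map, your observation that ``increasing'' is equivalent to $\min(\Delta_i) = \min(\llbracket n \rrbracket \setminus F_{i-1})$ for all $i$ is correct, and the localization of fundamental circuits and cocircuits to the layer minors (exchanges inside a layer lift to exchanges in $\M$, so non-active elements stay non-active, while everything below $m_i$ is trapped in $F_{i-1} \subseteq \mathrm{cl}(B \setminus m_i)$, so each $m_i$ stays active) does show $\Phi$ maps $\mathcal{A}_k(\M)$ into $\mathcal{B}_{k,0}(\M)$.

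But the proof has a genuine gap, and you have named it yourself. Your induction reduces the theorem to the claim that for every $B \in \mathcal{B}_{k,0}(\M)$ there exists a \emph{unique} flat $F$ with $0 \in F$, $\M|F$ connected, $B \cap F \in \mathcal{B}_{1,0}(\M|F)$, and $B \setminus F \in \mathcal{B}_{k-1,0}(\M/F)$; existence gives surjectivity of $\Phi$ and uniqueness gives injectivity, and neither is proved. Writing that this ``hinges on understanding how the fundamental cocircuits of distinct internally active elements nest and interact --- exactly the structural phenomenon isolated by Gioan and Las Vergnas --- and once it is in place the induction closes'' is circular in this context: that structural phenomenon \emph{is} the theorem you are trying to prove, not a lemma you may quote. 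Concretely, what is missing is a verification that your candidate block (built from the cocircuits $\gamma^*(m_i;B)$) has closure equal to a flat $F_1$ containing exactly the right part of $B$, that the two induced minors inherit the claimed activities (this direction is genuinely harder than the forward localization, because activity in $\M$ does not obviously descend to activity in an arbitrary restriction or contraction), and that no other flat satisfies the same three conditions. Until those three points are argued, what you have is a correct construction of the map in one direction together with a plausible plan for inverting it, not a proof of the bijection.
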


 Although not necessary for  this note, we remark that Gioan-Las Vergnas describe a more general activities decomposition for arbitrary bases, which refines an earlier decomposition of Etienne-Las Vergnas \cite[Theorem 5.1]{etienne-lasvergnas}, and implies a formula for the full Tutte polynomial.

\section{Elementary tropical intersection theory}

In this section we provide background on tropical intersection theory necessary for our main calculation in Section \ref{csmsection}.  We recommend \cite{fulton-sturmfels, maclagan-sturmfels, jensen-yu} for further details.

Let $\Sigma$ be a rational polyhedral fan of pure dimension $m$ in $\mathbb{R}^{n}$ and let $N \cong \mathbb{Z}^n$ be the lattice generated by a choice of basis vectors. We denote by $\Sigma_k$ its $k$-skeleton for $k \leq m$. For a cone $\tau$ of $\Sigma$, let $N_\tau$ denote the maximal sublattice of $N$ parallel to the linear span of $\tau$. Given such a fan $\Sigma$ and a weight function $c: \Sigma_m \rightarrow \mathbb{Z}$, we say that $\Sigma$ satisfies the \emph{balancing condition} at the ridge $\tau \in \Sigma_{m-1}$ with respect to weight $c$ if
\begin{align}
    \sum_{  \sigma \supsetneq \tau} c (\sigma) \mathbf{n}_{\sigma} &\in \text{span}_{\mathbb{R}}N_\tau ,
\end{align}
where $\mathbf{n}_{\sigma}$ is a primitive generator of $N_\sigma$ outside $N_\tau$, i.e. $\mathrm{span}_\mathbb{Z}(\mathbf{n}_{\sigma}, N_\tau) = N_\sigma$.
We say a pure rational fan $\Sigma$ of dimension $m$ is \emph{balanced} with respect to weight $c$ if it satisfies the balancing condition at each ridge $\tau \in \Sigma_{m-1}$. We say two balanced fans are equivalent if they induce the same weight function on their common refinement. An equivalence class of balanced fans is called a \emph{tropical cycle}. 

Let $\M$ be a loopless matroid of rank $d+1$ on the set $\llbracket n \rrbracket$. Let $\mathbf{e}_i$ denote the $i$th standard basis vector in $\mathbb{R}^{n+1}$, and for $F \subseteq \llbracket n \rrbracket$, let $\mathbf{e}_F = \sum_{i \in F} \mathbf{e}_i$. To each proper flag of flats
\begin{align}
    \mathcal{F} = \{ \varnothing = F_0 \subsetneq F_1 \subsetneq \cdots \subsetneq F_k  = \llbracket n \rrbracket\}
\end{align} 
we can associate a rational polyhedral cone $\sigma_\mathcal{F}$ in $\mathbb{R}^{n+1}$ given by
\begin{align}
    \sigma_\mathcal{F} = \mathbb{R}_{\geq 0} \{\mathbf{e}_{F_1}, \ldots, \mathbf{e}_{F_{k-1}}\} + \mathbb{R}\mathbf{e}_{\llbracket n \rrbracket}.
\end{align}
The \emph{non-reduced Bergman fan} $\widetilde{\Sigma}(\M)$ of $\M$ is the rational polyhedral fan consisting of the cones $\sigma_\mathcal{F}$ for each proper flag $\mathcal{F}$ of flats of $\M$. This is a pure $(d+1)$-dimensional simplicial fan in $\mathbb{R}^{n+1}$, which has $\mathbb{R} \mathbf{e}_{\llbracket n \rrbracket }$ as its lineality space. 
We define the \emph{Bergman fan} of $\M$, written $\Sigma(\M)$, to be the image of $\widetilde{\Sigma}(\M)$ in the quotient space $\mathbb{R}^{n+1}/\mathbb{R}\mathbf{e}_{\llbracket n \rrbracket}$ with the lattice $N = \mathbb{Z}^{n+1}/\mathbb{Z}\mathbf{e}_{\llbracket n \rrbracket}$. The Bergman fan of a matroid is a tropical cycle of dimension $d$: it is balanced with respect to the weight function $\mathbf{1}$ which assigns the weight 1 to each maximal cone. When no confusion may arise, we also denote this tropical cycle by $\Sigma(\M)$.


Given two tropical cycles $\mathcal{T} = (\Sigma, c)$ and $\mathcal{T}' = (\Sigma', c')$, their \emph{stable intersection} $\mathcal{T} \cdot \mathcal{T}'$ is defined as the tropical cycle with the support $\lim_{\mathbf{v} \rightarrow \mathbf{0}} |\Sigma| \cap |\Sigma' +  \mathbf{v}|$, and the weights given by 
\begin{align}
    m(\gamma) &= \sum_{\substack{\sigma \in \Sigma\\ \sigma' \in \Sigma'}} c(\sigma)c'(\sigma')[N: N_\sigma + N_{\sigma'}],
\end{align}
where the sum is taken over all $ \sigma, \sigma' \supseteq \gamma $ with $\sigma \cap (\sigma' +  \mathbf{v}) \neq \varnothing$ for any generic perturbation $\mathbf{v}$. The fan structure on $\mathcal{T} \cdot \mathcal{T}'$ is given by the common refinement of $\Sigma$ and $\Sigma'$.

Let $\U_{n+1-k, \llbracket n \rrbracket}$ be the uniform matroid on $\llbracket n \rrbracket$ of rank $n+1-k$, and note that $F \subsetneq \llbracket n \rrbracket$ is a flat of $\U_{n+1-k, \llbracket n \rrbracket}$ if and only if $|F| \leq n-k$.  Let $\mathcal{T} = (\Delta, c)$ be a $k$-dimensional tropical cycle in $\mathbb{R}^n$.  The  \emph{degree of $\mathcal{T}$}, written $\deg(\mathcal{T})$, is defined as $\sum_{\mathbf{p}} m(\mathbf{p})$, where the sum ranges over the finite set of points in the support of $\mathcal{T} \cdot \Sigma(\U_{n+1-k,\llbracket n \rrbracket})$. We note that the fan $\Sigma(\U_{n+1-k,\llbracket n \rrbracket})$ is considered a generic tropical linear space, thus this definition is a tropical analogue of the classical notion of the degree of a variety.

\section{Chern-Schwartz-MacPherson cycles and the main calculation}\label{csmsection}

Chern-Schwartz-MacPherson cycles for complements of complex hyperplane arrangements are certain matroid invariants which have been studied in \cite{cohen-denham-falk-varchenko,denham-garrousian-schulze, huh-chromatic, aluffi, huh-likelihood, huh-logconcavity}. L\'opez de Medrano-Rinc\'on-Shaw  provided a generalization of Chern-Schwartz-MacPherson cycles for arbitrary matroids.

 \begin{definition}{\cite[Definition 5]{ medrano-rincon-shaw}}\label{csmdef}
 Let $\M$ be a matroid of rank $d+1$.  The \emph{$k$th Chern-Schwartz-MacPherson cycle} $\csm_k(\M)$ for each $k=0, 1, \ldots, d$ is the weighted fan $(\Sigma_{k}, \omega)$, where $\Sigma_{k}$ is the $k$th skeleton of Bergman fan of $\M$ and $\omega: \Sigma_{k} \rightarrow \mathbb{Z}$ is given by
\begin{align}\label{csmform}
    \omega(\sigma_{\mathcal{F}}) &= (-1)^{d-k}\prod_{i=0}^k \beta(\M|F_{i+1}/F_i).
\end{align}

\end{definition}
 In \cite[Theorem 2.3]{medrano-rincon-shaw} it  was shown that the $\csm_k(M)$ cycles of a matroid of a hyperplane arrangement defined over $\mathbb{C}$ are given by the Chern-Schwartz-MacPherson cycles of the complement of the arrangement via the theory of Minkowski weights. Furthermore, it is demonstrated that these are tropical cycles. We remark that the presence of the $(-1)^{d-k}$ in Equation \eqref{csmform} is necessary for Definition \ref{csmdef} to agree with the definition for complements of complex hyperplane arrangements.  
 
 In the case of matroids representable over the complex numbers, additivity for Chern-Schwartz-MacPherson cycles has been applied to show that their degrees satisfy a specialization of the deletion-contraction formula for the Tutte polynomial \cite{cohen-denham-falk-varchenko,denham-garrousian-schulze, huh-chromatic, aluffi, huh-likelihood, huh-logconcavity}.   L\'opez de Medrano-Rinc\'on-Shaw combinatorially extended this inductive degree calculation to arbitrary matroids.
 


\begin{thrm}[{\cite[Theorem 1.4]{medrano-rincon-shaw}}] \label{thm:main}
Let $\M$ be a loopless matroid on $\llbracket n \rrbracket$ of rank $d+1$. Let $\csm_{k}(\M)$ denote its $k$th Chern-Schwartz-MacPherson cycle, then for $k= 0, \ldots, d$
\begin{align} 
    \deg(\csm_{k}(\M) ) = (-1)^{d-k}t_{k+1,0}.
\end{align}
\end{thrm}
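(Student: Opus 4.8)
The plan is to compute $\deg(\csm_k(\M))$ directly from its definition as the total weight of the stable intersection $\csm_k(\M) \cdot \Sigma(\U_{n-k,\llbracket n \rrbracket})$, and then to recognize the resulting weighted point count as the degree-$(k+1)$ part of the $\beta$-expansion \eqref{eq:tutte-beta}. First I would expand the stable intersection by the fan displacement rule. Fixing a generic perturbation $\mathbf{v}$ and recalling that the $k$-dimensional cones $\sigma_{\mathcal{F}}$ of $\Sigma(\M)$ are indexed by proper flags $\mathcal{F}\colon \varnothing = F_0 \subsetneq \cdots \subsetneq F_{k+1} = \llbracket n \rrbracket$ of length $k+1$, the degree becomes
\begin{align*}
  \deg(\csm_k(\M)) = \sum_{\substack{\mathcal{F},\, \tau \\ \sigma_{\mathcal{F}} \cap (\tau + \mathbf{v}) \neq \varnothing}} \omega(\sigma_{\mathcal{F}})\,[N : N_{\sigma_{\mathcal{F}}} + N_{\tau}],
\end{align*}
where $\tau$ ranges over the maximal cones of $\Sigma(\U_{n-k,\llbracket n \rrbracket})$ and $\omega$ is the weight \eqref{csmform}. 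Since the degree does not depend on the choice of generic $\mathbf{v}$, I am free to take $\mathbf{v}$ adapted to the linear order on $\llbracket n \rrbracket$.

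The heart of the argument is a local analysis identifying which pairs $(\sigma_{\mathcal{F}}, \tau)$ survive. Writing both cones through their $0/1$ indicator generators $\mathbf{e}_{F_i}$ and $\mathbf{e}_{S_j}$---here I use that the flats of $\U_{n-k,\llbracket n \rrbracket}$ are exactly the small subsets of $\llbracket n \rrbracket$ together with the ground set---the condition $\sigma_{\mathcal{F}} \cap (\tau + \mathbf{v}) \neq \varnothing$ becomes a system of linear inequalities in the cone parameters. I would show that, for $\mathbf{v}$ chosen lexicographically with respect to $<$, this system is feasible for a single cone $\tau = \tau(\mathcal{F})$ precisely when $\mathcal{F}$ is \emph{increasing} in the sense of Section 1, i.e. $\min(\Delta_1) < \cdots < \min(\Delta_{k+1})$, and is infeasible for every $\tau$ otherwise. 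For a surviving increasing flag, the generators $\mathbf{e}_{F_1}, \ldots, \mathbf{e}_{F_k}$, the generators of $\tau(\mathcal{F})$, and $\mathbf{e}_{\llbracket n \rrbracket}$ should assemble into a lattice basis of $\mathbb{Z}^{n+1}$, so that the index $[N : N_{\sigma_{\mathcal{F}}} + N_{\tau}]$ equals $1$; every surviving intersection point then counts with multiplicity one. I note that this crisp outcome is in fact forced: the weight $\omega(\sigma_{\mathcal{F}}) = (-1)^{d-k}\prod_i \beta(\M|F_i/F_{i-1})$ has a sign depending only on $k$, so all contributions share the sign $(-1)^{d-k}$ and no cancellation is possible, which is exactly why each increasing flag must contribute once and each non-increasing flag not at all.

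Granting this, the sum collapses to one term per increasing proper flag of length $k+1$, each weighted by $\omega(\sigma_{\mathcal{F}})$:
\begin{align*}
  \deg(\csm_k(\M)) = \sum_{\substack{\mathcal{F} \text{ increasing} \\ \ell(\mathcal{F}) = k+1}} \omega(\sigma_{\mathcal{F}}) = (-1)^{d-k} \sum_{\substack{\mathcal{F} \text{ increasing} \\ \ell(\mathcal{F}) = k+1}} \; \prod_{i=1}^{k+1} \beta(\M|F_i/F_{i-1}).
\end{align*}
The unsigned sum is exactly the coefficient of $x^{k+1}$ in the $\beta$-expansion \eqref{eq:tutte-beta}, namely $t_{k+1,0}$; equivalently it counts the set $\mathcal{A}_{k+1}(\M)$, which the Gioan--Las Vergnas bijection identifies with $\mathcal{B}_{k+1,0}(\M)$. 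Pulling out the constant sign gives $\deg(\csm_k(\M)) = (-1)^{d-k} t_{k+1,0}$, as required.

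The main obstacle is the local intersection analysis of the second paragraph. One must commit to a precise order-adapted perturbation, prove that feasibility of the displacement system is equivalent to the combinatorial increasing condition---this is where the $\min$-of-gaps inequalities must be matched against the sign pattern forced by $\mathbf{v}$---and verify the unimodularity of the relevant $0/1$ matrices so that each surviving point is counted exactly once. By contrast, the sign $(-1)^{d-k}$ needs no separate treatment, since it is constant across the $k$-skeleton and is already carried by $\omega$.
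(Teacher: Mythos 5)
Your overall strategy is exactly the paper's: compute $\deg(\csm_k(\M))$ by stably intersecting with $\Sigma(\U_{n-k,\llbracket n \rrbracket})$ under a perturbation $\mathbf{v}$ adapted to the order on $\llbracket n \rrbracket$, show that the surviving intersection points correspond bijectively to increasing proper flags with lattice index $1$, and conclude via the $\beta$-expansion \eqref{eq:tutte-beta}. All the claims you announce are true. But what you yourself call ``the heart of the argument'' --- that the displacement system is feasible for exactly one cone $\tau(\mathcal{F})$ when $\mathcal{F}$ is increasing and for no $\tau$ otherwise --- is precisely the content of the paper's proof, and you leave it unexecuted: you describe what ``one must'' prove rather than proving it. The paper's execution is short but not automatic. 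One normalizes $\mathbf{p}=\mathbf{v}+\mathbf{q}$ along the lineality space $\mathbb{R}\mathbf{e}_{\llbracket n \rrbracket}$ so that $\mathbf{q}\geq 0$ with some coordinate equal to $0$; observes that such a point lies in $\Sigma(\U_{n-k,\llbracket n \rrbracket})$ if and only if its support has size at most $n-k$; proves that on each block $\Delta_m$ the (constant) value of $\mathbf{p}$ must equal $\max_{a\in\Delta_m}\mathbf{v}_a$, because a strictly decreasing $\mathbf{v}$ permits at most one vanishing coordinate of $\mathbf{q}$ per block, so any block where $\mathbf{p}$ exceeds this maximum forces the support of $\mathbf{q}$ to be too large; and only then translates the cone inequalities $\mathbf{p}_i\geq\mathbf{p}_j$ into $\min(\Delta_m)<\min(\Delta_{m'})$, i.e.\ into the increasing condition. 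Some such coordinate analysis is indispensable; without it your proposal is a plan, not a proof.

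Moreover, the one substitute justification you offer in its place is fallacious. You assert the outcome is ``forced'' because every weight carries the same sign $(-1)^{d-k}$, so no cancellation can occur. Absence of cancellation constrains nothing here: it tells you only that the degree is $(-1)^{d-k}$ times a nonnegative integer. It cannot tell you which cones meet the perturbed fan, with what multiplicity, or that the resulting count equals $t_{k+1,0}$; to extract any of that from ``no cancellation'' you would already need to know the answer you are trying to prove. Strike that remark and carry out the local analysis; once it is done, the remainder of your argument --- the index-one multiplicities, the collapse of the sum to increasing flags of length $k+1$, and the identification of the unsigned sum with $t_{k+1,0}$ via \eqref{eq:tutte-beta}, equivalently via the Gioan--Las Vergnas bijection --- coincides with the paper's conclusion.
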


This interpretation of the coefficients of $T(M;x,0)$ has recently been utilized by Ardila-Denham-Huh \cite{ardila-denham-huh} for proving their log-concavity generalizing an earlier result of Huh \cite{huh-logconcavity}. 

The coefficients of $T(M;x,0)$ have alternate interpretations which have been invoked in the literature related to Chern-Schwartz-MacPherson cycles for matroids.  The coefficients of $T(M;x,0)$ agree with the $h$-vector of the broken circuit complex of $\M$.  See  \cite{bjorner-shellability} for a shelling proof of this fact, which can be alternately established by application of the generalized activities expansion of the Tutte polynomial due to Gordon-Traldi \cite[Theorem 3]{gordon-traldi}. 

When $M$ is a loopless non-empty matroid, its reduced characteristic polynomial is given by 
\begin{align*}
    {\bar \chi}_{\M}(q) = \frac{\chi_{\M}(q)}{(q-1)} =  {(-1)^{d+1}\frac{T(\M;1-q,0)}{(q-1)}},
\end{align*}
hence the shifted reduced characteristic polynomial is 
\begin{align}
   {\bar \chi}_{\M}(q+1) = {(-1)^{d+1}\frac{T(\M;-q,0)}{ q}}. 
\end{align}
Theorem \ref{thm:main} is presented in \cite{medrano-rincon-shaw} in terms of the coefficients of  $\overline{\chi}_M(q+1)$.

The purpose of this note is to now provide a direct non-recursive proof of Theorem \ref{thm:main} which highlights a connection to the theory of Tutte activities via the work of Gioan and Las Vergnas \cite{gioan-thesis,gioan-lasvergnas}.

\begin{proof}[Proof of Theorem \ref{thm:main}]

We compute the stable intersection $\csm_{k}(M)\cdot \Sigma(\U_{n+1-k, \llbracket n \rrbracket})$.  All vectors considered in the proof lie in $\mathbb{R}^{n+1}/\mathbb{R}\mathbf{e}_{\llbracket n \rrbracket}$. Given some vector $\mathbf{v} \in \mathbb{R}^{n+1}$, let $[\mathbf{v}]$ be its image in $\mathbb{R}^{n+1}/\mathbb{R}\mathbf{e}_{\llbracket n \rrbracket}$.  Now fix some generic $\mathbf{v}\in \mathbb{R}^{n+1}$.  Let $\mathbf{p}, \mathbf{q} \in \mathbb{R}^{n+1}$ be such that $[\mathbf{p}] \in \csm_{k}(\M)$, and $\mathbf{v}+\mathbf{q}=\mathbf{p}$.  We will completely determine the conditions on $[\mathbf{p}]$ which ensure that $[\mathbf{q}] \in \Sigma(\U_{n+1-k, \llbracket n \rrbracket})$.  

Suppose that $\mathbf{p}$ lies in the maximal cone $\sigma_{\mathcal{F}}$ of $\csm_{k}(\M)$ associated to the flag of flats $\mathcal{F} = \{\varnothing = F_0 \subsetneq F_1 \subsetneq \cdots \subsetneq F_{k} \subsetneq F_{k+1} = \llbracket n \rrbracket\}$ of $\M$.  By the genericity of $\mathbf{v}$, we may assume after relabeling the indices that for every $0\leq i\leq n$ we have $\mathbf{v}_i > \mathbf{v}_{i+1}$.  Next, by adding the appropriate multiple of $\mathbf{e}_{\llbracket n \rrbracket}$ to $\mathbf{p}$ and $\mathbf{q}$, we may assume $\mathbf{p}\geq \mathbf{v}$ and there exists some $i$ such that $\mathbf{p}_i = \mathbf{v}_i$.  This gives that $\mathbf{q} \geq 0$ and there exists some $i$ with $\mathbf{q}_i = 0$.  Under these assumptions, $[\mathbf{q}] \in \Sigma(\U_{n+1-k, \llbracket n \rrbracket})$ if and only if $\mathbf{q}$ has support of size at most $n-k$.  

Let $1\leq m \leq k+1$, and recall that for any $i \in \Delta_m$, the entry $\mathbf{p}_i$ has the same value, which we denote $\mathbf{p}^m$.   We claim that $\mathbf{p}^m = \max_{a \in \Delta_m} \mathbf{v}_a $ for each $1\leq m \leq k+1$ if and only if $[\mathbf{q}] \in \Sigma(\U_{n+1-k, \llbracket n \rrbracket})$.  Because $\mathbf{v}$ is strictly decreasing, for each $1 \leq m \leq k$ there can be at most one index $i \in \Delta_m$ such that $\mathbf{p}^m=\mathbf{v}_i$.  Therefore if there exists an $m$ such that $\mathbf{p}^m > \max_{i \in \Delta_m} \mathbf{v}_i$, the support of $\mathbf{q}$ is strictly greater than $n-k$, thus $[\mathbf{q}] \notin \Sigma(\U_{n+1-k, \llbracket n \rrbracket})$.  Conversely, if $\mathbf{p}^m = \max_{i \in \Delta_m} \mathbf{v}_i $, because $\mathbf{v}$ is strictly decreasing we see that for $i \in \Delta_m$ we have $\mathbf{q}_i =0$ if and only if $i$ is the minimum index in $\Delta_m$, thus the support of $\mathbf{q}$ has size $n-k$ and $[\mathbf{q}] \in \Sigma(\U_{n+1-k, \llbracket n \rrbracket})$.  This calculation demonstrates that there is a single intersection point for each increasing proper flag of flats associated to a maximal cone of $\csm_{k}(M)$.

Let $\mathcal{I}$ be the maximal flag of flats of $U_{n+1-k,\llbracket n \rrbracket}$ such that $\sigma_\mathcal{F}$ intersects $\sigma_{\mathcal{I}}$, a cone in $\Sigma(U_{n+1-k, \llbracket n \rrbracket})$. Then the cone $\sigma_\mathcal{I}$ consists of all points $[\mathbf{q}] \in \mathbb{R}^{n+1}/\mathbb{R}\mathbf{e}_{\llbracket n \rrbracket}$ corresponding to $\mathbf{q} \in \mathbb{R}^n$ such that $\mathbf{q}_i = 0$ if and only if $i$ is the minimum index of $\Delta_m$ for $1 \leq m \leq k+1$. In other words $N_{\sigma_\mathcal{I}}$ is generated by $\mathbf{e}_i$'s for $i \in \llbracket n \rrbracket \backslash \cup_i \min (\Delta_i)$.
This implies that points in $N_{\sigma_\mathcal{I}}$ and $N_{\sigma_{\mathcal{F}}}$ together generates $N$. Hence the index $[N:N_{\sigma_\mathcal{I}} + N_{\sigma_{\mathcal{F}}}]=1$. Thus, the weight of a point $[\mathbf{p}]$ in the intersection is
\begin{align}
    m([\mathbf{p}]) &= (-1)^{d-k}\prod_{i=0}^k \beta(\M|F_{i+1}/F_i).
\end{align}
Therefore
\begin{align}
   \deg(\csm_{k}(\M) ) = \sum m([\mathbf{p}]) &= (-1)^{d-k}\sum \prod_{i=1}^k \beta(\M|F_{i}/F_{i-1}),
\end{align}
where the sum on the right is over all proper increasing flags of flats $\mathcal{F}$ of $\M$ of length $k$. By Equation \eqref{eq:tutte-beta}, this equals the coefficient $(-1)^{d-k}t_{k+1, 0}$. \end{proof}

\begin{cor}\label{welldef}
 Let $M$ be a loopless non empty matroid, then $T(M;x,0)$ is well-defined, i.e. it is independent of the total order on the ground set.
\end{cor}

\begin{proof}
In the proof of Theorem \ref{thm:main}, we always obtain the same value for the degree of $\csm_{k}(\M)$ regardless of which chamber of the braid arrangement into which we perturb our fan, and each such chamber corresponds to a different total order on the ground set.  Thus the well-definedness of $T(M;x,0)$ is a consequence of the well-definedness of the degree map, which is itself a consequence of the tropical balancing condition. 
\end{proof}

\begin{remark}
 The perspective offered in the proof of Corollary \ref{welldef} is implicit in the earlier work of Huh-Katz\cite{huh-katz} who use tropical intersection theory to show that the degrees of reciprocal linear spaces agree with the unsigned coefficients of the reduced characteristic polynomial.  For performing this calculation, those authors first demonstrate that these unsigned coefficients count \emph{initial $k$-step flags of flats}.  It is not hard to show that these flags are canonically in bijection with the  faces of the reduced broken circuit complex, i.e. the independent sets contained in bases in ${\mathcal B}_{k,0}(M)$ which avoid the element $0$.  We note that a more standard method for proving the well-definedness of the coefficients of the Tutte polynomial is to demonstrate Crapo's interval decomposition of the Boolean lattice on the ground set of $M$.  This decomposition implies that $T(M;x+1,y+1)$ is the corank-nullity generating function whose definition does not involve a total order on the ground set.  See \cite{backman-activities} for an overview of this decomposition.
\end{remark}

\section{Acknowledgements}

The authors thank Graham Denham, Alex Fink, Emeric Gioan, Felipe Rincon, and Kristin Shaw for inspiring discussions.


\begin{thebibliography}{}
\bibitem[ADH20]{ardila-denham-huh}
{Federico Ardila, Graham Denham, and June Huh},
\emph{Lagrangian geometry of matroids},
\url{https://arxiv.org/pdf/2004.13116.pdf}.


\bibitem[AHK18]{adiprasito-huh-katz}
{Adiprasito, Karim and Huh, June and Katz, Eric},
\emph{Hodge theory for combinatorial geometries},
{Ann. of Math. (2)},
{\bf 188},
{(2018)},
{no.~2},
\url{https://doi.org/10.4007/annals.2018.188.2.1}.

\bibitem[Alu13]{aluffi}
{Paolo Aluffi},
\emph{Grothendieck classes and {C}hern classes of hyperplane arrangements},
{Int. Math. Res. Not. IMRN},
{(2013)},
{no.~8},
{1873--1900},
\url{https://doi.org/10.1093/imrn/rns100}.

\bibitem[Ash19]{ashraf-thesis}
{Ahmed Umer Ashraf},
\emph{Of matroid polytopes, chow rings and character polynomials}
{PhD thesis},
{University of Western Ontario},
{(2019)}.

\bibitem[Bac20+]{backman-activities}
{Spencer Backman},
\emph{Tutte Activities}
In: J. Ellis-Monaghan and I. Moffatt, eds., Handbook on the Tutte Polynomial and Related Topics, CRC Press, (2020+).

\bibitem[Bj\"o92]{bjorner-shellability}
{Anders Bj\"{o}rner},
\emph{The homology and shellability of matroids and geometric lattices},
{Matroid applications},
{Encyclopedia Math. Appl.},
{\bf 40},
{226--283},
{Cambridge Univ. Press, Cambridge},
{(1992)},
\url{https://doi.org/10.1017/CBO9780511662041.008}.


\bibitem[Bry77]{brylawski-broken}
{Tom Brylawski},
\emph{The broken-circuit complex},
{Trans. Amer. Math. Soc.},
{\bf 234},
{(1977)},
{no.~2},
{417--433},
\url{https://doi.org/10.2307/1997928}.


\bibitem[CDFV11]{cohen-denham-falk-varchenko}
{Daniel Cohen, Graham Denham, Michael Falk and Alexander Varchenko},
\emph{Critical points and resonance of hyperplane arrangements},
{Canad. J. Math.},
{Canadian Journal of Mathematics. Journal Canadien de Math\'{e}matiques},
{\bf 63},
{(2011)},
{no.~5},
{1038--1057},
\url{https://doi.org/10.4153/CJM-2011-028-8}.

\bibitem[Cra69]{crapo-tuttepolynomial}
{Henry H. Crapo},
\emph{The {T}utte polynomial},
{Aequationes Math.}, 
{\bf 3},
{(1969)},
{211--229},
\url{https://doi.org/10.1007/BF01817442}.



\bibitem[DGS12]{denham-garrousian-schulze}
{Graham Denham, Mehdi Garrousian and Mathias Schulze},
\emph{A geometric deletion-restriction formula},
{Adv. Math.},
{\bf 230},
{(2012)},
{no.~4-6},
{1979--1994},
\url{https://doi.org/10.1016/j.aim.2012.04.003}.

\bibitem[ELV98]{etienne-lasvergnas}
{Gwihen Etienne and Michel Las Vergnas},
\emph{External and internal elements of a matroid basis},
{Discrete Math.},
{\bf 179},
{(1998)},
{no.~1-3},
{111--119},
\url{https://doi.org/10.1016/S0012-365X(95)00332-Q}.

\bibitem[FS97]{fulton-sturmfels}
{William Fulton and Bernd Sturmfels}, 
\emph{Intersection theory on toric varieties},
{Topology},
{\bf 36},
{(1997)},
{no.~2},
{335--353},
\url{https://doi.org/10.1016/0040-9383(96)00016-X}.



\bibitem[Gio02]{gioan-thesis}
{Emeric Gioan},
\emph{Correspondance naturelle entre bases et r\'eorientations des matr\"oides orient\'e.},
{PhD thesis},
{University of Bordeaux 1, France},
{(2002)}.


\bibitem[GL18]{gioan-lasvergnas}
{Emeric Gioan and Michel Las Vergnas},
\emph{The Active Bijection 2.a - Decomposition of activities for matroid bases, and Tutte polynomial of a matroid in terms of beta invariants of minors},
{arXiv e-prints},
{(2018)},
{arXiv:1807.06516},
\url{https://arxiv.org/abs/1807.06516}.


\bibitem[GT90]{gordon-traldi}
{Gary Gordon and Lorenzo Traldi},
\emph{Generalized activities and the Tutte polynomial},
{Discrete Math.},
{\bf 85},
{(1990)},
{no.~2},
{167--176},
\url{https://doi.org/10.1016/0012-365X(90)90019-E}.

\bibitem[HK12]{huh-katz}
{June Huh and Eric Katz},
\emph{Log-concavity of characteristic polynomials and the Bergman fan of matroids},
{Math. Ann.},
{\bf 354},
{(2012)},
{no.~3},
{1103--1116},
{https://doi.org/10.1007/s00208-011-0777-6}.

\bibitem[Huh12]{huh-chromatic}
{June Huh},
\emph{Milnor numbers of projective hypersurfaces and the chromatic polynomial of graphs},
{J. Amer. Math. Soc.},
{\bf 25},
{(2012)},
{no.~3},
{907--927},
\url{https://doi.org/10.1090/S0894-0347-2012-00731-0}.



\bibitem[Huh13]{huh-likelihood}
{June Huh},
\emph{The maximum likelihood degree of a very affine variety},
{Compos. Math.},
{\bf 149},
{(2013)},
{no.~8},
{1245--1266},
\url{https://doi.org/10.1112/S0010437X13007057}.

\bibitem[Huh14]{huh-thesis}
{June Huh},
\emph{Rota’s conjecture and positivity of algebraic cycles in permutohedral varieties},
{PhD thesis},
{University of Michigan},
{(2014)}.

\bibitem[Huh15]{huh-logconcavity}
{June Huh},
\emph{{$h$}-vectors of matroids and logarithmic concavity},
{Adv. Math.},
{Advances in Mathematics},
{\bf 270},
{2015},
{49--59},
\url{https://doi.org/10.1016/j.aim.2014.11.002}.

\bibitem[JY16]{jensen-yu}
{Anders Jensen and Josephine Yu},
\emph{Stable intersections of tropical varieties},
{J. Algebraic Combin.},
{\bf 43},
{(2016)},
{no.~1},
{101--128},
\url{https://doi.org/10.1007/s10801-015-0627-9}.

\bibitem[KRS99]{kook-reiner-stanton}
{Woong Kook. Victor Reiner, and Dennis Stanton},
\emph{A convolution formula for the Tutte polynomial},
{J. Combin. Theory Ser. B},
{\bf 76},
{(1999)},
{no.~2},
{297--300},
\url{https://doi.org/10.1006/jctb.1998.1888}.

\bibitem[LdMRS20]{medrano-rincon-shaw}
{Luc\'{\i}a L\'opez de Medrano, Felipe  Rinc\'{o}n, and Kristin Shaw},
\emph{Chern-Schwartz-MacPherson cycles of matroids},
{Proc. Lond. Math. Soc. (3)},
{\bf 120},
{(2020)},
{no.~1},
{1--27},
\url{https://doi.org/10.1112/plms.12278}.

\bibitem[MS15]{maclagan-sturmfels}
{Diane Maclagan and Bernd Sturmfels},
\emph{Introduction to tropical geometry},
{Graduate Studies in Mathematics},
{\bf 161},
{American Mathematical Society, Providence, RI},
{(2015)}.



\bibitem[Pro77]{provan-thesis}
{John Scott Provan},
\emph{Decompositions, shellings, and diameters of simplicial complexes and convex polyhedra},
{Thesis (Ph.D.)--Cornell University},
{ProQuest LLC, Ann Arbor, MI},
{(1977)},
\url{http://search.proquest.com/docview/302843450}.





\bibitem[Wil76]{wilf-chromatic}
{Herbert S. Wilf},
\emph{Which polynomials are chromatic?},
{Colloquio Internazionale sulle Teorie {C}ombinatorie ({R}oma, 1973), {T}omo {I}},
{247--256. Atti dei Convegni Lincei, No. 17},
{(1976)}.



\end{thebibliography}
\end{document}